\documentclass[a4paper,12pt,reqno]{article}
\usepackage[T1]{fontenc}
\usepackage{authblk}
\usepackage{epsfig,graphicx,subfigure}
\usepackage{amsthm,amsmath,amssymb,amsfonts}
\usepackage{color,xcolor}
\usepackage[all]{xy}
\usepackage[top=40mm, bottom=40mm, left=40mm, right=35mm]{geometry}
\usepackage{hyperref}


\newtheorem{theorem}{Theorem}[section]

\newtheorem{proposition}[theorem]{Proposition}
\newtheorem{corollary}[theorem]{Corollary}
\theoremstyle{definition}

\numberwithin{equation}{section}

\begin{document}
\setcounter{page}{1}
\title{\vspace{-1.5cm}
\vspace{.5cm}
\vspace{.7cm}
{\large{\bf  $\theta$-derivations on convolution algebras} }}
 \date{}
\author{{\small \vspace{-2mm} M. Eisaei$^1$ and Gh. R. Moghimi $^1$\footnote{Corresponding author} }}
\affil{\small{\vspace{-4mm}  $^1$ Department of Mathematics,
Payame Noor University (PNU), Tehran 19395-4697, Iran
}}
\affil{\small{\vspace{-4mm}  mojdehessaei59@student.pnu.ac.ir }}
\affil{\small{\vspace{-4mm} moghimimath@pnu.ac.ir}}
\maketitle
\hrule
\begin{abstract}
\noindent
In this paper, we investigate $\theta$-derivations on Banach algebra $ L_0^{\infty} (w)^*$. First, we study the range of them and prove the Singer-Wermer conjucture. We also give a characterization of the space of all $\theta$-derivations on $ L_0^{\infty} (w)^*$. Then, we prove automatic continuity and Posner's theorems for $\theta$-derivations.
 \end{abstract}

 \noindent \textbf{Keywords}: Convolution algebras, $\theta$-derivations, Singer-Wermer conjucture,  automatic continuity, Posner's theorems.\\
{\textbf{2020 MSC}}: 47B47, 46H40, 16W25.
\\
\hrule
\vspace{0.5 cm}
\baselineskip=.8cm
\section{Introduction}
Let $A$ be a Banach algebra with the center $Z(A)$ and the right annihilator; i.e.,
\begin{equation*}
Z(A)=\{ a \in A: ~~ ax=xa ~~~  \text{for all}~~~ x \in A \}
\end{equation*}
and
\begin{equation*}
\hbox{ran}(A)=\{ r \in A: ~~ ar=0 ~~~ \text{for all}~~~ a \in A\}.
\end{equation*}
Let $D: A \rightarrow A$ be a linear map and $k \in \Bbb{N}$.
Then $D$ is called \textit{$k$-centralizing}  if for every $m \in A$, we have
\begin{equation*}
[D(m), m^k ] := D(m) m^k- m^k D(m) \in Z(A).
\end{equation*}
In particular, if $[D(m), m^k]=0$, then $D$ is called
\textit{$k$-commuting}.
Assume now that $\theta: A \rightarrow A$ is a homomorphism. Then $D$ is called a $\theta$\emph{-derivation} if
\begin{equation*}
D(mn)=D(m) \theta(n) + \theta(m) D(n)
\end{equation*}
for all $m,n \in A$.
If $\theta$ is the identity map, then $D$ is called a \textit{derivation}.

Let us recall that a continuous function $w:[0,\infty) \rightarrow [1,\infty)$
is called a \textit{weight function} if $w(0)=1$ and for all $x,y \in [0, \infty)$
\begin{equation*}
w(x+y) \leq w(x) w(y).
\end{equation*}
Let $L^1(w)$ be the Banach space of all Lebesgue measurable functions $f$ on $[0, \infty)$ such that $wf \in L^1([0,\infty))$, the Banach algebra of all Lebesque integrable functions on $[0,\infty)$.
It is well-known that $L^1(w)$ is a Banach algebra with the convolution product
\begin{equation*}
\varphi * \psi(x)= \int_{0}^{\infty} \varphi(y) \psi(x-y) d(y), ~~~~ (\varphi, \psi \in L^1([0, \infty))
\end{equation*}
and the norm
\begin{equation*}
\|\varphi\|_w= \int_{0}^{\infty} w(x) |\varphi|(x) dx ~~~ (\varphi \in L^1([0, \infty));
\end{equation*}
see [4, 15].
Let also $L_0^{\infty}(w)$ be the Banach space of all Lebesgue measure functions $f$ on $[0, \infty)$ such that
\begin{equation*}
\lim_{x \rightarrow \infty} \text{ess sup}
 \bigg \{ \frac{f(y) \chi_{(x, \infty)} (y)}{w(y)}: y \geq 0 \bigg\}=0,
\end{equation*}
where $\chi_{(x, \infty)}$ is the characteristic function of $(x, \infty)$ on $[0, \infty)$.
It is well-known that the dual of $L_0^{\infty} (w)$, represented by
$L_0^{\infty} (w)^*$, is a Banach algebra with the first Arens product defined by
\begin{equation*}
\langle mn, f \rangle = \langle m, nf \rangle,
\end{equation*}
where $\langle nf, \varphi \rangle = \langle n, f \varphi \rangle$, in which
\begin{equation*}
f \varphi(x)= \int_{0}^{\infty} f(x+y) \varphi(y) dy
\end{equation*}
for all $m,n \in L_0^{\infty} (w)^*$, $f \in L_0^{\infty} (w)$, $\varphi \in L^1(w)$ and $x\geq 0$; see [7, 8, 11, 12].
Note that every element $\varphi \in L^1(w)$ can be regarded as an element of $ L_0^{\infty} (w)^*$,
\begin{equation*}
\langle \varphi, f \rangle = \int \varphi(x) f(x) dx
\end{equation*}
for all $f \in  L_0^{\infty} (w)$.
We denoted by
$\Lambda ( L_0^{\infty} (w)^*) $ the set of all right identities of
$ L_0^{\infty} (w)^*)$ with bounded one.
For every $u \in \Lambda ( L_0^{\infty} (w)^*) $ and
$m \in L_0^{\infty} (w)^*$, we have $m-um \in \hbox{ran} (L_0^{\infty} (w)^*)$ and
\begin{equation*}
m= um+(m-um).
\end{equation*}
It follows that
\begin{equation*}
 L_0^{\infty} (w)^*= u  L_0^{\infty} (w)^* \oplus  \hbox{ran} (L_0^{\infty} (w)^*).
\end{equation*}
One can prove that the radical of $ L_0^{\infty} (w)^*$ is equal to
$ \hbox{ran} (L_0^{\infty} (w)^*)$; see [13].

Derivations and $\theta$-derivations were studied by several authors [1-3, 9, 10, 13, 14].
For example, derivations on $L_0^{\infty} (w)^*$ investigated in [13].
They proved that the range of a derivation on  $L_0^{\infty} (w)^*$ is contained into $\hbox{ran}(L_0^{\infty} (w)^*)$.
They also showed that the zero map is the only $k$-centralizing derivation on $L_0^{\infty} (w)^*$.

In this paper, we investigate $\theta$-derivations  on  $L_0^{\infty} (w)^*$. In the case where, $\theta$ is an isomorphism, we prove that the range of $\theta$-derivations  on  $L_0^{\infty} (w)^*$ is contained into the radical of $L_0^{\infty} (w)^*$. If $\theta$ is also continuous, then $D$ is continuous if and only if $D|_{\hbox{ran}(L_0^{\infty} (w)^*)}$ is continuous. In this case, $D|_{uL_0^{\infty} (w)^*}$ is always continuous. 
Finally, we study Posner first and second theorems for $\theta$-derivations  on  $L_0^{\infty} (w)^*$.

\section{Main Results}
Singer and Wermer [16] showed that the range of a continuous derivation on a
commutative Banach algebra is a subset of its radical. They conjectured that the continuity requirement for the derivations can be removed.
Thomas [17]  proved the conjecture. In the sequal, we investigate this conjecture for $\theta$-derivation  on non-commutative Banach algebra $L_0^{\infty} (w)^*$.

\begin{theorem} \label{m1}
Let $\theta $ be a homomorphism on $L_0^{\infty} (w)^*$ and
$D$ be a $\theta$-derivation  on  $L_0^{\infty} (w)^*$.
Then the following statements hold.

\emph{(i)}
$D$ maps $\emph{ran} (L_0^{\infty} (w)^*)$ and $\Lambda (L_0^{\infty} (w)^*)$ into  $\emph{ran} (L_0^{\infty} (w)^*)$.

\emph{(ii)}
If $\theta$ is an isomorphism, then $D$ maps
$L_0^{\infty} (w)^*$ into  $\emph{ran} (L_0^{\infty} (w)^*)$.
\end{theorem}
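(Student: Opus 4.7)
The argument naturally splits into three pieces, all keyed to the decomposition $L_0^\infty(w)^* = uL_0^\infty(w)^* \oplus \mathrm{ran}(L_0^\infty(w)^*)$ with $u \in \Lambda(L_0^\infty(w)^*)$ a fixed right identity. The resulting criterion is that $m \in \mathrm{ran}$ if and only if $um = 0$, which holds because any $s \in uL_0^\infty(w)^* \cap \mathrm{ran}$ satisfies $s = us = 0$. The strategy throughout is to verify $D(x) \in \mathrm{ran}$ by showing $u D(x) = 0$.

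For the $\mathrm{ran}$ half of (i), take $r \in \mathrm{ran}$ and $s$ arbitrary. Since $sr = 0$, the $\theta$-derivation identity yields $\theta(s) D(r) = -D(s)\theta(r)$, while applying the homomorphism $\theta$ to $sr = 0$ gives $\theta(s)\theta(r) = 0$. Specializing $s = u$, together with $u r = 0$ and $ru = r$ (so $D(r) = D(r)\theta(u) + \theta(r) D(u)$), I plan to combine these identities and left-multiply by $u$ to isolate and annihilate $uD(r)$. For the $\Lambda$ half, the idempotence $u^2 = u$ gives $D(u) = D(u)\theta(u) + \theta(u) D(u)$, and $xu = x$ gives $\theta(x) D(u) = D(x) - D(x)\theta(u)$ for every $x$. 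Combined with $\theta(u)^2 = \theta(u)$ and the commutativity of the quotient $uL_0^\infty(w)^*$ (which forces $u[a,b] = 0$ for all $a,b$, hence $uD(u)\theta(u) = u\theta(u) D(u)$), left-multiplying the first identity by $u$ should reduce $uD(u)$ to zero after a short manipulation.

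For (ii), assume $\theta$ is an isomorphism. Surjectivity upgrades $\theta(s)\theta(r) = 0$ (valid for every $s$) to $a\theta(r) = 0$ for every $a$, so $\theta$ preserves $\mathrm{ran}$; combined with part (i), this lets $D$ descend to a well-defined $\bar\theta$-derivation $\bar D$ on the commutative Banach algebra quotient $L_0^\infty(w)^*/\mathrm{ran} \cong uL_0^\infty(w)^*$. A Singer--Wermer/Thomas-type result for $\bar\theta$-derivations on commutative Banach algebras (with $\bar\theta$ now an automorphism) then forces the range of $\bar D$ into the radical of the quotient, which is zero; hence $\bar D = 0$ and $D(L_0^\infty(w)^*) \subseteq \mathrm{ran}(L_0^\infty(w)^*)$. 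The main obstacle I anticipate is the $\Lambda$ case of (i) without a surjectivity assumption on $\theta$: because $\theta(u)$ is only an idempotent and not a right identity, the naive derivation-style cancellations fail, and one must genuinely exploit the commutativity of $uL_0^\infty(w)^*$ to absorb the cross-terms. The secondary substantive ingredient is the commutative Singer--Wermer analogue for $\theta$-derivations that drives the descent-to-quotient argument in (ii).
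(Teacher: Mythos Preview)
Your route diverges substantially from the paper's, and the divergence matters. The paper's proof of (i) rests on a structural fact imported from Lemma~2.1 of [5]: for \emph{any} homomorphism $\theta$ on $L_0^\infty(w)^*$ one has $\theta(\mathrm{ran})\subseteq\mathrm{ran}$ and $\theta(u)=u+r_0$ with $r_0\in\mathrm{ran}$, so that $k\theta(u)=k$ for every $k$. With this in hand the argument is immediate: $kD(r)=k\theta(u)D(r)=k\bigl(D(ur)-D(u)\theta(r)\bigr)=0$, and from $D(u)=D(u^2)=D(u)\theta(u)+\theta(u)D(u)=D(u)+\theta(u)D(u)$ one gets $\theta(u)D(u)=0$, hence $kD(u)=k\theta(u)D(u)=0$. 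No commutativity of the quotient is needed, and no surjectivity of $\theta$ is needed to push $\mathrm{ran}$ into $\mathrm{ran}$.

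Your plan replaces that lemma by the assertion that $uL_0^\infty(w)^*$ is commutative (equivalently $u[a,b]=0$ for all $a,b$). This is the load-bearing step in both halves of your (i), and you have not justified it; the paper neither states nor uses it, and for Arens-type products on duals over commutative base algebras such quotients are frequently noncommutative. If that commutativity fails, your manipulations for both $D(r)$ and $D(u)$ collapse, since you cannot swap $uD(r)\theta(u)$ with $u\theta(u)D(r)$, etc. For (ii) the paper's device is also much lighter than your descent-plus-Singer--Wermer--Thomas strategy: since $\theta$ is an isomorphism, $\theta^{-1}D$ is an \emph{ordinary} derivation on $L_0^\infty(w)^*$, so the already-known result for derivations (Theorem~2.1 of [5], cf.\ [13]) gives $\theta^{-1}D(L_0^\infty(w)^*)\subseteq\mathrm{ran}$, and then $D(L_0^\infty(w)^*)\subseteq\theta(\mathrm{ran})\subseteq\mathrm{ran}$. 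This avoids any appeal to a $\bar\theta$-version of Thomas's theorem and, again, any commutativity hypothesis on the quotient.
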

\begin{proof}
$\text{(i)}$ First, note that if
$u \in \Lambda(L_0^{\infty}(w)^*)$ and $r \in \hbox{ran}(L_0^{\infty}(w)^*)$, then
$\theta(r) \in \hbox{ran}(L_0^{\infty}(w)^*)$ and
\begin{eqnarray*}
\theta(u)= u+r_0
\end{eqnarray*}
for some $r_0 \in \hbox{ran}(L_0^{\infty}(w)^*)$;
see Lemma 2.1 of [5].
So for every $k \in L_0^{\infty}(w)^*$, we have
\begin{eqnarray*}
k D(r)&=& k (u+r_0). D(r)\\
&=& k \theta(u) D(r)\\
&=& k[D(ur)-D(u) \theta(r)]=0.
\end{eqnarray*}
This shows that $D(r) \in \hbox{ran}(L_0^{\infty}(w)^*)$.
We also have
\begin{eqnarray*}
D(uu) &=&D(u) \theta (u) + \theta(u) D(u)  \\
&=& D(u) + \theta (u) D(u).
\end{eqnarray*}
Hence $\theta (u) D(u)=0$.
It follows that
\begin{eqnarray*}
kD(u) = k\theta(u) D(u)=0
\end{eqnarray*}
for all $k \in L_0^{\infty}(w)^*$.
Therefore, $D(u) \in \hbox{ran}(L_0^{\infty}(w)^*)$. \\
$\text{(ii)}$ Let $\theta$ be an isomorphism. Then $\theta^{-1}D$ is a derivation on $L_0^{\infty}(w)^*$.
By Theorem 2.1 of [5],
\begin{eqnarray*}
\theta^{-1} D(L_0^{\infty}(w)^*) \subseteq \hbox{ran}(L_0^{\infty}(w)^*).
\end{eqnarray*}
Thus
\begin{eqnarray*}
D(L_0^{\infty}(w)^*) \subseteq \theta(\hbox{ran}(L_0^{\infty}(w)^*)) \subseteq \hbox{ran}(L_0^{\infty}(w)^*),
\end{eqnarray*}
as claimed.
\end{proof}
A mapping $T: L_0^{\infty} (w)^* \rightarrow L_0^{\infty} (w)^*$ is called \emph{spectrally bounded} if there exists $c \geq 0$ such that $r(T(m)) \leq \alpha r(m)$ for all $m \in L_0^{\infty} (w)^*$, where $r(m)$ denotes the spectral radius of $m$.
\begin{corollary} \label{m2}
Let $\theta$ be a homomorphism on $L_0^{\infty} (w)^*$.  Then the following statements hold.

\emph{(i)}
The product of two $\theta$-derivations on $L_0^{\infty} (w)^*$
is a $\theta$-derivation on $\emph{ran} (L_0^{\infty} (w)^*)$.

\emph{(ii)} Every $\theta$-derivation on $\emph{ran} (L_0^{\infty} (w)^*)$ is spectrally bounded.
\end{corollary}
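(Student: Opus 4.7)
The plan is that both parts collapse to Theorem~\ref{m1}(i) once we use the fact that $\hbox{ran}(L_0^{\infty}(w)^*)$ is a zero algebra: for any $r,s$ in this set, the defining property $as=0$ applied with the specific choice $a=r$ yields $rs=0$. In particular every element of $\hbox{ran}(L_0^{\infty}(w)^*)$ squares to zero, hence is nilpotent with spectral radius zero. This single observation, together with the invariance $\theta(\hbox{ran}(L_0^{\infty}(w)^*))\subseteq \hbox{ran}(L_0^{\infty}(w)^*)$ that was already recorded in the proof of Theorem~\ref{m1}(i), trivializes both claims.

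For (i), let $D_1$ and $D_2$ be $\theta$-derivations on $L_0^{\infty}(w)^*$. By Theorem~\ref{m1}(i) each $D_i$ sends $\hbox{ran}(L_0^{\infty}(w)^*)$ into itself, so the composition $D_1D_2$ is a well-defined linear self-map of $\hbox{ran}(L_0^{\infty}(w)^*)$. For $r,s\in \hbox{ran}(L_0^{\infty}(w)^*)$ we have $rs=0$, so $D_1D_2(rs)=0$. On the other side, $\theta(r)$ and $\theta(s)$ again lie in $\hbox{ran}(L_0^{\infty}(w)^*)$, which forces both summands $D_1D_2(r)\,\theta(s)$ and $\theta(r)\,D_1D_2(s)$ to vanish as well. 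Hence the $\theta$-derivation identity holds identically on $\hbox{ran}(L_0^{\infty}(w)^*)$.

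For (ii), let $D$ be a $\theta$-derivation on $\hbox{ran}(L_0^{\infty}(w)^*)$, interpreted as a linear self-map of this set. By the preliminary remark $r(m)=0$ for every $m\in \hbox{ran}(L_0^{\infty}(w)^*)$, and likewise $r(D(m))=0$ since $D(m)$ again lies in $\hbox{ran}(L_0^{\infty}(w)^*)$. The inequality $r(D(m))\le c\, r(m)$ therefore holds for any $c\ge 0$ (for instance $c=0$), and so $D$ is spectrally bounded.

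The main obstacle is essentially vacuous: once one notices that the multiplication on $\hbox{ran}(L_0^{\infty}(w)^*)$ is identically zero, both statements reduce to bookkeeping. The only non-trivial ingredient is Theorem~\ref{m1}(i), which simultaneously supplies the invariance of $\hbox{ran}(L_0^{\infty}(w)^*)$ under $\theta$-derivations and under $\theta$ itself.
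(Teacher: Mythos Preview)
Your proof is correct and matches the argument the paper leaves implicit (the corollary is stated without proof, relying on Theorem~\ref{m1}(i) together with the observation that $\hbox{ran}(L_0^{\infty}(w)^*)$ has trivial multiplication). One small wording fix: the vanishing of $\theta(r)\,D_1D_2(s)$ is because $D_1D_2(s)\in\hbox{ran}(L_0^{\infty}(w)^*)$, a fact you already established, not merely because $\theta(r)$ lies there, since $\hbox{ran}$ is only a \emph{right} annihilator.
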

In the next result, we investigate the automatic continuity of $\theta$-derivation on $L_0^{\infty} (w)^*$; see [6] for the automatic continuity of derivation on
commutative semisimple Banach algebras.

\begin{proposition} \label{m3}
Let $\theta$ be a continuous isomorphism on $ L_0^{\infty} (w)^*$ and $D$ be a $\theta$-derivation on $L_0^{\infty} (w)^*$.
Then the following statements hold.

\emph{(i)}
$D|_{u L_0^{\infty} (w)^*}$ is always continuous for all $u \in \Lambda (L_0^{\infty} (w)^*)$.

\emph{(ii)}
$D$ is continuous if and only if
$D|_{ \emph{ran}(L_0^{\infty} (w)^*)}$ is continuous.
\end{proposition}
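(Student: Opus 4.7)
I would handle (ii) by reducing it to (i) via the topological direct-sum splitting, and attack (i) by identifying $D|_{u L_0^{\infty}(w)^*}$ as a factor of a manifestly continuous operator. The forward direction of (ii) is trivial. For the backward direction, left multiplication by $u$ is bounded on the Banach algebra, so both projections associated with the decomposition $L_0^{\infty}(w)^* = u L_0^{\infty}(w)^* \oplus \mathrm{ran}(L_0^{\infty}(w)^*)$, namely $m \mapsto um$ and $m \mapsto m - um$, are continuous. Writing $D(m) = D(um) + D(m - um)$ then displays $D$ as a sum of two continuous pieces: the first by part (i) and the second by hypothesis.

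For (i), the starting point is the $\theta$-derivation identity applied to any representation $v = un$:
\[
D(v) = D(u)\theta(n) + \theta(u)D(n).
\]
By Theorem~\ref{m1}(ii), $D(n) \in \mathrm{ran}(L_0^{\infty}(w)^*)$, and since every element of the right annihilator is killed by any left multiplier, the second summand vanishes, leaving $D(v) = D(u)\theta(n)$. Applying the same identity to an arbitrary $k$ with $uk = 0$ again kills the $\theta(u)D(k)$ term and therefore forces $D(u)\theta(k) = 0$; consequently the expression $D(u)\theta(n)$ is independent of the chosen representative $n$ of $v = un$.

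To upgrade this formula to continuity, I would introduce the bounded operator $\phi \colon L_0^{\infty}(w)^* \to L_0^{\infty}(w)^*$ defined by $\phi(n) = D(u)\theta(n)$, which is continuous because $\theta$ is continuous and left multiplication by $D(u)$ is bounded. By the previous paragraph $\phi$ factors through the left-multiplication map $L_u \colon L_0^{\infty}(w)^* \to u L_0^{\infty}(w)^*$ as $\phi = (D|_{u L_0^{\infty}(w)^*}) \circ L_u$. The target $u L_0^{\infty}(w)^*$ is a closed direct-summand of $L_0^{\infty}(w)^*$ (its complement $\mathrm{ran}(L_0^{\infty}(w)^*)$ is the intersection of the kernels of the bounded left-multiplication maps, hence closed), so it is itself a Banach space, and $L_u$ is a continuous linear surjection onto it. The open mapping theorem then makes $L_u$ open, and the induced map $D|_{u L_0^{\infty}(w)^*}$ inherits continuity from $\phi$.

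The most delicate step I anticipate is the well-definedness portion of (i): verifying that $uk = 0$ really forces $D(u)\theta(k) = 0$ requires the simultaneous use of Theorem~\ref{m1}(ii) (to annihilate $\theta(u)D(k)$) and the identity $0 = D(uk) = D(u)\theta(k) + \theta(u)D(k)$. Once this is secured, the open mapping passage and the topological gluing for (ii) are routine.
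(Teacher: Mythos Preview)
Your argument is correct, and for part (ii) it coincides with the paper's proof. For part (i), however, you take a more elaborate route than necessary. You establish $D(un) = D(u)\theta(n)$ and then invoke the open mapping theorem to transfer continuity of $n \mapsto D(u)\theta(n)$ across the surjection $L_u$. The paper instead makes one further algebraic simplification: since $\theta(u) = u + r_0$ with $r_0 \in \mathrm{ran}(L_0^{\infty}(w)^*)$ and $u$ is a right identity, one has $D(u)\theta(u) = D(u)u + D(u)r_0 = D(u)$, and therefore
\[
D(um) \;=\; D(u)\theta(m) \;=\; D(u)\theta(u)\theta(m) \;=\; D(u)\theta(um).
\]
This expresses $D(um)$ directly as a continuous function of $um$ and yields the explicit bound $\|D(um)\| \le \|D(u)\|\,\|\theta\|\,\|um\|$ with no appeal to the open mapping theorem. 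Your approach would still work whenever $L_u$ is merely a bounded surjection onto a closed subspace; the paper's shortcut exploits the special circumstance that $u$ is idempotent, so every $v \in uL_0^{\infty}(w)^*$ satisfies $v = uv$ and serves as its own preimage. Incidentally, your well-definedness check---that $uk = 0$ forces $D(u)\theta(k) = 0$---is correct but superfluous: once $D(un) = D(u)\theta(n)$ is established, the left-hand side manifestly depends only on $un$, so the right-hand side does too.
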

\begin{proof}
Let $\theta$ be a continuous isomorphism on $L_0^{\infty}(w)^*$ and $D$ be a $\theta$-derivation on $L_0^{\infty}(w)^*$.
Then $D$ maps $L_0^{\infty}(w)^*$ into $\hbox{ran}(L_0^{\infty}(w)^*)$ and so
\begin{eqnarray*}
D(um)&=& D(u) \theta(m) + \theta(u) D(m)\\
&=& D(u) \theta(m)= D(u) \theta(u) \theta(m)\\
&=& D(u) \theta (um)
\end{eqnarray*}
for all $m \in L_0^{\infty}(w)^*$ and $u \in \Lambda ( L_0^{\infty}(w)^*)$.
Thus
\begin{eqnarray*}
\|D(um) \|&=& \|D(u) \theta(um) \|\\
& \leq &\|D(u)\| \|\theta (um) \| \\ 
&\leq& \|D(u) \| \|\theta\| \| um\|.
\end{eqnarray*}
For (ii), let $D$ be continuous on $\hbox{ran}(L_0^{\infty}(w)^*)$.
Then there exist $c_1, c_2 \geq 0$
 such that for every $m \in L_0^{\infty}(w)^*$,
$u \in \Lambda(L_0^{\infty}(w)^*)$  and $r \in \hbox{ran}(L_0^{\infty}(w)^*)$,
\begin{equation*}
\|D(r) \| \leq c_1 \|r\| \quad \text{and} \quad \|D(um) \| \leq c_2 \|um\|.
\end{equation*}
Assume that $m \in L_0^{\infty}(w)^*$. Then
\begin{equation*}
m=um+r,
\end{equation*}
where
$r=m-um \in \hbox{ran}(L_0^{\infty}(w)^*)$. So by (i), we obtain
\begin{eqnarray*}
\|D(m)\|&=& \|D(um)+ D(r)\| \\
&\leq& \|D(um)\| + \|D(r)\| \\
&\leq& c_2 \|um\| + c_1 \|r\| \\
&\leq& c_2 \|u\| \|m\| + c_1 ( \|m\|+ \|u\| \|m\|) \\
&=& (c_2 + 2 c_1) \|m\|.
\end{eqnarray*}
That is, $D$ is continuous. The converse is clear.
\end{proof}
Let $\mathcal{A}$  be a Banach algebra and $A$  be a closed subalgebra of
$\mathcal{A}$.
 We denote by $\emph{Der}(\mathcal{A}, A)$
the space of all $\theta$-derivations from
  $\mathcal{A}$  into $A$,
where $\theta$ is a homomorphism on  $\mathcal{A}$.
We also denote by
 $\mathcal{B}(\mathcal{A}, A)$ the space of all bounded linear operators from  $\mathcal{A}$  into $A$.
We write
 $\emph{Der}(\mathcal{A}):= \emph{Der}(\mathcal{A}, A)$ and $\mathcal{B}(\mathcal{A}):= \mathcal{B}(\mathcal{A}, A)$.
\begin{theorem} \label{m4}
Let $\theta$ be an isometrically isomorphism on
$L_0^{\infty}(w)^*$ and $u \in \Lambda (L_0^{\infty}(w)^*)$. Then
\begin{equation*}
\emph{Der}(L_0^{\infty}(w)^*)= \emph{Der}(L_0^{\infty}(w)^*, u L_0^{\infty}(w)^*) \oplus \emph{Der}(L_0^{\infty}(w)^*, \emph{ran}(L_0^{\infty}(w)^*))
\end{equation*}
\end{theorem}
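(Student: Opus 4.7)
The plan is to reduce the claimed identity directly to Theorem~\ref{m1}(ii). Since $\theta$ is in particular an isomorphism, that result gives $D(L_0^\infty(w)^*) \subseteq \hbox{ran}(L_0^\infty(w)^*)$ for every $D \in \emph{Der}(L_0^\infty(w)^*)$. I would begin by recording this inclusion, so that every $\theta$-derivation automatically belongs to the second summand $\emph{Der}(L_0^\infty(w)^*, \hbox{ran}(L_0^\infty(w)^*))$.

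To establish the sum, I would exhibit for each $D \in \emph{Der}(L_0^\infty(w)^*)$ the decomposition $D = 0 + D$; the zero map lies (vacuously) in $\emph{Der}(L_0^\infty(w)^*, uL_0^\infty(w)^*)$, and $D$ itself lies in $\emph{Der}(L_0^\infty(w)^*, \hbox{ran}(L_0^\infty(w)^*))$ by the previous step. For the direct-sum (trivial-intersection) part, a $\theta$-derivation $D$ lying in both summands would satisfy $D(L_0^\infty(w)^*) \subseteq uL_0^\infty(w)^* \cap \hbox{ran}(L_0^\infty(w)^*)$, and the algebra-level splitting $L_0^\infty(w)^* = uL_0^\infty(w)^* \oplus \hbox{ran}(L_0^\infty(w)^*)$ recorded in the introduction forces this intersection to be $\{0\}$; hence $D = 0$, and uniqueness of the decomposition follows.

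The main conceptual point, rather than an obstacle, is that under these hypotheses the first summand $\emph{Der}(L_0^\infty(w)^*, uL_0^\infty(w)^*)$ is itself trivial: any $\theta$-derivation with range in $uL_0^\infty(w)^*$ must, by Theorem~\ref{m1}(ii), also have range in $\hbox{ran}(L_0^\infty(w)^*)$, hence in the zero intersection just described. Consequently the theorem is essentially a formal repackaging of Theorem~\ref{m1}(ii), with the $uL_0^\infty(w)^*$-summand playing a cosmetic role; the isometric assumption on $\theta$ is not actually exploited beyond what plain isomorphism provides, so one should either flag the stronger hypothesis as unused or employ it in a strengthened version (e.g.\ to carry norm control through $\theta^{-1}$), though nothing in the statement requires this.
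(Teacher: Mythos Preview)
Your argument is correct, and it takes a genuinely different route from the paper's own proof. The paper does not invoke Theorem~\ref{m1}(ii) at all; instead it fixes $D\in\emph{Der}(L_0^\infty(w)^*)$ and explicitly decomposes it as $D=d+T$ with $d(m):=D(um)$ and $T(m):=D(m-um)$, then verifies by direct computation that $d$ satisfies the $\theta$-derivation identity (this is the only place the isometric hypothesis is used, via $\theta(u)=u$), and finally appeals to $uL_0^\infty(w)^*\cap\hbox{ran}(L_0^\infty(w)^*)=\{0\}$ for directness. Your approach is shorter and more conceptual: by reducing everything to Theorem~\ref{m1}(ii) you show at once that the first summand is $\{0\}$ and the second summand is all of $\emph{Der}(L_0^\infty(w)^*)$, so the decomposition is trivial. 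This has two advantages: it exposes the theorem as essentially a reformulation of Theorem~\ref{m1}(ii), and it shows that the isometric assumption is superfluous (plain isomorphism suffices). The paper's constructive decomposition, on the other hand, does not make this collapse visible --- indeed, under the stated hypotheses its map $d$ is in fact identically zero by exactly the reasoning you give, a point the paper does not remark on.
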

\begin{proof}
Let $D \in \emph{Der}(L_0^{\infty}(w)^*)$.
We define $d: L_0^{\infty}(w)^* \rightarrow u L_0^{\infty}(w)^*$
and $T: L_0^{\infty}(w)^* \rightarrow L_0^{\infty}(w)^*$ by
\begin{equation*}
d(m)= D(um) \quad \text{and} \quad T(m)= D(m- um)
\end{equation*}
for all $m \in L_0^{\infty}(w)^*$. Note that
\begin{eqnarray*}
D(m)&=& D(um+(m-um))\\
& =& D(um)+D(m-um)\\
&=&d(m)+ T(m)
\end{eqnarray*}
for all $m \in L_0^{\infty}(w)^*$.
Let $m_1, m_2 \in L_0^{\infty}(w)^*$. Since $\theta$ is an isometrically isomorphism, $\theta(u)=u$.
Thus
\begin{eqnarray*}
d(m_1 m_2) &=& D(um_1 um_2) \\
&=& D(um_1) \theta(um_2) + \theta(um_1) D(um_2) \\
&=& D(um_1) \theta(u) \theta(m_2) + \theta(u) \theta(m_1) D(u m_2) \\
&=& D(um_1) \theta(m_2) + \theta(m_1) D(um_2) \\
&=& d(m_1) \theta(m_2) + \theta (m_1) d(m_2).
\end{eqnarray*}
Hence $d \in \emph{Der}( L_0^{\infty}(w)^*, u L_0^{\infty}(w)^*)$. On the other hand,
\begin{equation*}
u L_0^{\infty}(w)^* \cap \hbox{ran}(L_0^{\infty}(w)^*) = \{0\}.
\end{equation*}
These facts prove the result.
\end{proof}
In the following, let $C_0(w)$ be the Banach space of all
complex-valued continuous functions $f$ on $[0, \infty)$ such that $f/w$ vanishes at infinity.

\begin{proposition} \label{m5}
Let $\theta$ be a homomorphism on $L_0^{\infty} (w)^*$,
$D: L_0^{\infty} (w)^* \rightarrow \emph{ran}(L_0^{\infty}(w)^*)$ be a $\theta$-derivation and $m \in L_0^{\infty} (w)^*$.
If $D(m)$ is positive, then $D(m)=0$.
\end{proposition}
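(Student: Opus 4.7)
The plan is to combine two observations. First, by the very definition of the right annihilator, the hypothesis $D(m)\in\hbox{ran}(L_0^\infty(w)^*)$ forces $k\cdot D(m)=0$ for every $k\in L_0^\infty(w)^*$; taking $k=D(m)$ yields $D(m)^2=0$, so $D(m)$ is a square-zero element of the algebra. The proposition thus reduces to an algebraic statement: every positive $\mu\in L_0^\infty(w)^*$ satisfying $\mu^2=0$ in the first Arens product must vanish.

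To exploit positivity I would unwind the Arens product via the formulas given in the introduction, $\langle\mu^2,f\rangle=\langle\mu,\mu f\rangle$ together with $\langle\mu f,\varphi\rangle=\langle\mu,f\varphi\rangle$ and $(f\varphi)(x)=\int_0^\infty f(x+y)\varphi(y)\,dy$. For $f,\varphi\geq 0$ the kernel is nonnegative, hence $\mu f\geq 0$ in $L_0^\infty(w)$, and $\mu^2=0$ then says that the positive functional $\mu$ annihilates every element of the form $\mu f$ with $f\geq 0$.

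The next step brings in $C_0(w)$, which the paper introduces precisely at this point. Restricting $\mu$ to $C_0(w)\subseteq L_0^\infty(w)$ and using the isometric lattice isomorphism $f\mapsto f/w$ from $C_0(w)$ onto $C_0([0,\infty))$, the Riesz representation theorem produces a positive Radon measure $\nu$ on $[0,\infty)$, finite against the weight $w$, that represents $\mu$ on $C_0(w)$. The relation $\mu^2=0$ transfers by Fubini to $\nu*\nu=0$, and since for positive measures one has $(\nu*\nu)([0,\infty))=\nu([0,\infty))^2$, we conclude $\nu=0$ and hence $\mu|_{C_0(w)}=0$.

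The main obstacle, and probably the heart of the argument, is to upgrade $\mu|_{C_0(w)}=0$ to $\mu=0$ on all of $L_0^\infty(w)$. I would use the defining tail condition of $L_0^\infty(w)$, namely $\operatorname{ess\,sup}\{f(y)/w(y):y>x\}\to 0$ as $x\to\infty$, to approximate a generic nonnegative $f\in L_0^\infty(w)$ from below by nonnegative compactly supported continuous functions in $C_0(w)$, and then invoke monotonicity of the positive functional $\mu$ to conclude $\langle\mu,f\rangle=0$; linearity then finishes the proof.
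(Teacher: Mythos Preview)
The paper's proof is a two-line appeal to \cite{mnr}: since $D(m)\in\mathrm{ran}(L_0^\infty(w)^*)$ one has $D(m)|_{C_0(w)}=0$, and for positive functionals the cited reference gives $\|D(m)\|=\|D(m)|_{C_0(w)}\|$, so $D(m)=0$. Your proposal is essentially an attempt to reprove these two facts from scratch, but in doing so you introduce an unnecessary detour and one genuine error.

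The detour is your passage through $\mu^2=0$. You observe that $k\cdot D(m)=0$ for every $k$, then specialize to $k=D(m)$; but the unspecialized statement is much stronger. Writing $\mu=D(m)$, the identity $a\mu=0$ for all $a\in L_0^\infty(w)^*$ unwinds (via $\langle a\mu,f\rangle=\langle a,\mu f\rangle$) to $\mu f=0$ in $L_0^\infty(w)$ for every $f$, i.e.\ $\langle\mu,f\varphi\rangle=0$ for all $f\in L_0^\infty(w)$ and $\varphi\in L^1(w)$. This already shows $\mu$ vanishes on the module product $L_0^\infty(w)\cdot L^1(w)$, whose closure is $C_0(w)$, giving $\mu|_{C_0(w)}=0$ with no Riesz representation and no ``$\nu*\nu=0$'' step. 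Your Fubini transfer is in fact not justified as written: to compute $\langle\mu^2,f\rangle$ via the measure $\nu$ you would need $\mu f\in C_0(w)$, and this uses structural facts you have not established.

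The genuine gap is in your final step. Approximating a nonnegative $f\in L_0^\infty(w)$ \emph{from below} by $f_n\in C_0(w)$ and using monotonicity only yields $0=\mu(f_n)\le\mu(f)$, which you already knew from positivity. What is needed is domination \emph{from above}: one must produce $g\in C_0(w)$ with $g\ge f$ a.e.\ (this is where the tail condition defining $L_0^\infty(w)$ and the continuity of $w$ enter), and then $0\le\mu(f)\le\mu(g)=0$. This is precisely the content of the norm identity $\|\mu\|=\|\mu|_{C_0(w)}\|$ that the paper imports from \cite{mnr}.
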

\begin{proof}
Let $m \in L_0^{\infty} (w)^*$ and $D(m)$ be positive. Then
\begin{equation*}
\|D(m)\|= \| D(m)|_{C_0(w)}\| =0;
\end{equation*}
see [8].
\end{proof}
\begin{corollary} \label{m6}
Let $\theta$ be an isomorphism on $L_0^{\infty} (w)^*$ and $D$ be a $\theta$-derivation on $L_0^{\infty} (w)^*$.
If $m \in L_0^{\infty} (w)^*$ such that is positive, then $D(m)=0$.
\end{corollary}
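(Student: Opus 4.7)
The plan is to derive the corollary from Theorem \ref{m1}(ii) together with Proposition \ref{m5}. Since $\theta$ is assumed to be an isomorphism, Theorem \ref{m1}(ii) already delivers the inclusion $D(L_0^{\infty}(w)^*) \subseteq \hbox{ran}(L_0^{\infty}(w)^*)$, so $D$ may be regarded as a $\theta$-derivation from $L_0^{\infty}(w)^*$ into $\hbox{ran}(L_0^{\infty}(w)^*)$, which is precisely the setting in which Proposition \ref{m5} applies. Once we know that positivity of $m$ forces positivity of $D(m)$, Proposition \ref{m5} immediately gives $\|D(m)\| = \|D(m)|_{C_0(w)}\| = 0$ and hence $D(m)=0$.

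The main technical step is therefore to transfer positivity from $m$ to $D(m)$. I would start from the decomposition of the positive functional $m$ coming from its restriction to $C_0(w)$, writing $m=\mu+s$ with $\mu$ a positive measure contributing to the $uL_0^{\infty}(w)^*$-part and $s$ a positive element of $\hbox{ran}(L_0^{\infty}(w)^*)$. Applying $D$ termwise and using the $\theta$-derivation identity, I would exploit the fact that $D(m)\in\hbox{ran}(L_0^{\infty}(w)^*)$ forces $\theta(m)D(m)=0$, so $D(m^{2})=D(m)\theta(m)+\theta(m)D(m)=D(m)\theta(m)$ and inductively $D(m^{n})=D(m)\theta(m)^{n-1}$. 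Combined with the positivity of $m^{n}$ for every $n$ and the positivity of $\theta(m^{n-1})$ (which follows from $\theta$ being an isomorphism that fixes a right identity $u$ modulo the radical, as recalled in the proof of Theorem \ref{m1}(i)), I expect to conclude that $D(m)$ lies in the positive cone.

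The main obstacle I anticipate is exactly this positivity-preservation step. A general $\theta$-derivation does not send positive elements to positive ones, so the argument must rely on the fact that the range lies in the right annihilator (equivalently, the radical) together with the order-theoretic structure of $L_0^{\infty}(w)^*$ as an Arens dual, and on the way $\theta$ interacts with the positive cone via right identities. Should a direct positivity argument resist, the fallback plan is to bypass positivity of $D(m)$ altogether and mimic the proof of Proposition \ref{m5}: show $D(m)|_{C_0(w)}=0$ because $D(m)$ sits in the radical, and then conclude $\|D(m)\|=0$ by invoking the $C_0(w)$-norming property for positive elements cited there from reference [8], applied to $m$ rather than $D(m)$ together with the multiplicative estimates furnished by the identity $D(m^{n})=D(m)\theta(m)^{n-1}$ and the spectral radius bound from Corollary \ref{m2}(ii).
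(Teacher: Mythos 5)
Your first paragraph is exactly the paper's entire proof: invoke Theorem \ref{m1}(ii) to see that $D$ maps $L_0^{\infty}(w)^*$ into $\mathrm{ran}(L_0^{\infty}(w)^*)$, then apply Proposition \ref{m5}. The paper stops there. But you have correctly spotted that this reduction does not close the argument as the corollary is stated: Proposition \ref{m5} hypothesizes that $D(m)$ is positive, whereas the corollary hypothesizes that $m$ is positive, and the paper silently conflates the two. The most charitable reading is that the garbled phrase ``such that is positive'' is a typo for ``such that $D(m)$ is positive,'' in which case the two-line reduction is complete and your remaining two paragraphs are unnecessary.

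The trouble is that the positivity-transfer step you propose to supply does not work, and I see no way to make it work. A $\theta$-derivation is merely linear; nothing in its definition interacts with the order structure, so there is no reason $D$ should map the positive cone into itself. Your inductive identity $D(m^{n})=D(m)\theta(m)^{n-1}$ is fine, but it yields no sign information on $D(m)$ because you never establish that $D(m^{n})$ is positive -- that is precisely the thing you are trying to prove for $n=1$. The fallback is also unsound: the norming identity $\|n\|=\|n|_{C_0(w)}\|$ cited from [8] in Proposition \ref{m5} holds only for \emph{positive} functionals $n$, so applying it to $m$ rather than to $D(m)$ says nothing about $\|D(m)\|$; and the mere fact that $D(m)$ lies in $\mathrm{ran}(L_0^{\infty}(w)^*)$ (hence vanishes on $C_0(w)$) cannot force $D(m)=0$, since the right annihilator is a nonzero subspace consisting exactly of such functionals. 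In short: either restate the corollary with the hypothesis on $D(m)$, in which case your opening paragraph is the whole (and correct) proof, or accept that the statement as printed is not established by this route -- a counterexample to positivity-preservation by $D$ would not even contradict anything proved earlier in the paper.
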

\begin{proof}
Let $\theta$ be an isomorphism on $L_0^{\infty} (w)^*$.
It follows from Theorem 2.1 that $D$ maps
 $L_0^{\infty} (w)^*$ into $\hbox{ran}(L_0^{\infty}(w)^*)$.
Now, apply Proposition 2.5.
\end{proof}

\begin{theorem} \label{m7}
Let $\theta$ be a homomorphism on $L_0^{\infty}(w)^*$ and $D$ be a $\theta$-derivation on $L_0^{\infty}(w)^*$.
Then $D$ is $k$-centralizing if and only if $D$ is $k$-commuting.
In this case, if $\theta$ is an isometrically isomorphism, then
$D$ maps  $L_0^{\infty}(w)^*$ into $u L_0^{\infty}(w)^*$ and
$D(m)= \theta(u) D(m)$ for all $m \in L_0^{\infty}(w)^*$ and
$u \in \Lambda (L_0^{\infty}(w)^*)$.
\end{theorem}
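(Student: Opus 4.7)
The plan is to handle the biconditional first and then the structural consequence under the isometric isomorphism hypothesis.

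For the equivalence, the direction $k$-commuting $\Rightarrow$ $k$-centralizing is immediate since $0\in Z(L_0^{\infty}(w)^*)$. For the converse I aim to show that the commutator $c:=[D(m),m^k]$ always lies in $\emph{ran}(L_0^{\infty}(w)^*)$, independent of the centralizing hypothesis; combined with the hypothesis this places $c$ in $Z(L_0^{\infty}(w)^*)\cap\emph{ran}(L_0^{\infty}(w)^*)$, which should be $\{0\}$: any $z$ in the intersection satisfies $uz=zu=0$ (by centrality and the annihilator property), and one then pushes this through the decomposition $z-uz\in\emph{ran}$ together with the identity $au=au^2$ to force $z=0$. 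To place $c$ itself into $\emph{ran}$, I would compute $ac$ for arbitrary $a\in L_0^{\infty}(w)^*$ using the identity $am=aum$ coming from $m-um\in\emph{ran}$, the consequent equalities $am^k=aum^k$ and $u^k=u^2$ for $k\geq 2$, and the $\theta$-derivation rule, in the spirit of the cancellation scheme in the proof of Theorem~\ref{m1}(i), where $ur=0$ and $\theta(r)\in\emph{ran}$ drive the collapse.

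For the consequence under the isometric isomorphism hypothesis, note that $\theta(u)=u$ (as recorded in the proof of Theorem~\ref{m4}) and that by Theorem~\ref{m1}(ii), $D(m)\in\emph{ran}(L_0^{\infty}(w)^*)$; this forces $m^kD(m)=0$, so the $k$-commuting identity from the first part gives $D(m)m^k=0$ for every $m$. To derive $D(m)=\theta(u)D(m)=uD(m)$, I would use the decomposition $m=um+r$ with $r\in\emph{ran}$ and the $\theta$-derivation rule to write
\[
D(um)=D(u)\theta(m)+\theta(u)D(m)=D(u)\theta(m)+uD(m),
\]
whence $D(m)-uD(m)=D(u)\theta(m)+D(r)$, a sum of two elements already in $\emph{ran}$ by Theorem~\ref{m1}(i). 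Showing this sum vanishes should follow by specialising $D(x)x^k=0$ to $x=u+\lambda r$ with $r\in\emph{ran}$ (using $(u+\lambda r)^k=u^k+\lambda ru^{k-1}$, $D(u)r=0$, and $D(r)r=0$) to extract $D(u)u^k=0=D(r)u^k$, and then propagating these relations to a general $m$ via $\theta$-derivation identities applied to products $u\cdot m$.

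The principal obstacle is the step in the first part that places $[D(m),m^k]$ into $\emph{ran}(L_0^{\infty}(w)^*)$ without an isomorphism hypothesis on $\theta$, since Theorem~\ref{m1}(ii) is then unavailable. The easy cases $m\in\emph{ran}$ (where $m^k=0$ for $k\geq 2$) and $m=u$ (where $u^kD(u)=0$ because $D(u)\in\emph{ran}$) are immediate, but a generic $m=um+r$ requires carefully combining $\theta(u)=u+r_0$ with $r_0\in\emph{ran}$ (Lemma~2.1 of [5]) and the $\theta$-derivation rule to carry the analogous cancellations all the way through.
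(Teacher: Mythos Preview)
Your principal obstacle is real, and the tools you list will not overcome it. The identities $am=aum$, $\theta(u)=u+r_0$, and the $\theta$-derivation rule only relate $D$ applied to products to expressions of the form $D(\cdot)\theta(\cdot)$; they never let you commute $D(m)$ past $m^k$ inside a product $a\,D(m)\,m^k$, which is what you need to place $[D(m),m^k]$ into $\hbox{ran}(L_0^{\infty}(w)^*)$. The missing structural fact is that $uL_0^{\infty}(w)^*\cong L^1(w)$ is \emph{commutative}: since $m\mapsto um$ is a homomorphism onto this subalgebra (because $a(b-ub)=0$ gives $u(ab)=(ua)(ub)$), one has $u(xy)=u(yx)$ for all $x,y$, i.e.\ every commutator already lies in $\hbox{ran}(L_0^{\infty}(w)^*)$. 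With this in hand the paper's argument is a single line: for $c:=[D(m),m^k]\in Z(L_0^{\infty}(w)^*)$ one has $c=cu=uc=u\bigl(D(m)m^k-m^kD(m)\bigr)=0$. Your detour through $Z\cap\hbox{ran}=\{0\}$ is then superfluous, since centrality alone already gives $c=uc$.

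For the second part your plan is correct but circuitous. You invoke Theorem~\ref{m1}(ii) to get $D(m)\in\hbox{ran}(L_0^{\infty}(w)^*)$ and then pass to $D(x)x^k=0$ before specialising to $x=u+\lambda r$; the paper instead works directly with the $k$-commuting identity and only Theorem~\ref{m1}(i). Since $D(u)\in\hbox{ran}(L_0^{\infty}(w)^*)$ one has $[D(u),u^k]=D(u)u^k-u^kD(u)=D(u)-0=D(u)$, whence $D(u)=0$. For $r\in\hbox{ran}(L_0^{\infty}(w)^*)$ the element $u+r$ is idempotent, so $(u+r)^k=u+r$ and
\[
0=[D(u+r),(u+r)^k]=[D(r),u+r]=D(r)u-uD(r)=D(r),
\]
using $D(r)\in\hbox{ran}(L_0^{\infty}(w)^*)$. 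Hence $D(m)=D(um)+D(m-um)=D(um)=D(u)\theta(m)+\theta(u)D(m)=\theta(u)D(m)$, and $\theta(u)\in\Lambda(L_0^{\infty}(w)^*)$ finishes. Your computation with $x=u+\lambda r$ reaches the same conclusions $D(u)=D(r)=0$, so the argument can be salvaged, but the ``propagation via $\theta$-derivation identities applied to $u\cdot m$'' you mention is unnecessary once those two vanish.
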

\begin{proof}
Let $D$ be $k$-centralizing.
Then $[ D(m), m^k] \in Z(L_0^{\infty}(w)^*)$
 for all $m \in L_0^{\infty}(w)^*$.
Let $u \in \Lambda (L_0^{\infty}(w)^*)$. Then
\begin{eqnarray*}
[D(m), m^k] &=& [D(m), m^k] u \\
&=& u [D(m), m^k] \\
&= & u D(m) m^k - u m^k D(m) \\
&=&0.
\end{eqnarray*}
Thus $D$ is $k$-commuting.
The converse is trivial.
Since $D(u) \in \hbox{ran}(L_0^{\infty}(w)^*)$,
we have
\begin{equation} \label{E1}
D(u)= [ D(u), u^k ]=0.
\end{equation}
For every $r \in \hbox{ran}(L_0^{\infty}(w)^*)$,
 we have $r+u= (r+u)^k$.
Thus
\begin{eqnarray*}
0&=& [D(r+u), (r+u)^k]\\
& =& [D(r), r+u]\\
&=& D(r) u- u D(r)\\
&=& D(r),
\end{eqnarray*}
because $D(r) \subseteq \hbox{ran} (L_0^{\infty}(w)^*)$.
Hence $D(r)=0$.
Therefore, by \eqref{E1} we obtain
\begin{eqnarray*}
D(m) &=& D(um) + D(m-um) \\
&=& D(um)= D(u) \theta (m) + \theta(u) D(m) \\
&=& \theta (u) D(m).
\end{eqnarray*}
To complete the proof, we only recall that by Lemma 2.1 of [5],
$\theta(u) \in \Lambda (L_0^{\infty}(w)^*)$.
\end{proof}
\vspace{0.5 cm}

\end{document}